\def \RR {\mathbb{R}}
\def\EE {\mathbb{E}}
\def \Y {\hat{Y}}
\def \x {x^{\star}} 
\newtheorem{prop}{Proposition}[section]
\newtheorem{defi}[prop]{Definition}
\newtheorem*{lem*}{Lemma}
\newtheorem{rem}{Remark}
\newtheorem{cor}{Corollary}[section]
\newcommand{\argmin}{\mathop{\mathrm{argmin}}}
\title{Kriging measure-valued data with sparse observations: application to nuclear safety studies }
\begin{document}

\maketitle

\begin{center}

 \underline{Florian Gossard}$^{1}$, 
  François Bachoc$^{2,5}$, Jean Baccou$^{3}$, Thibaut Le Gouic$^{4}$, Jacques Liandrat$^{4}$, Tony Glantz$^{3}$ 
\bigskip

{\it
$^{1}$ IMT, Univ. Paul Sabatier, F-31062 Toulouse, France, 
florian.gossard@math.univ-toulouse.fr \\
$^{2}$ Institut Universitaire de France (IUF), France \\
$^{3}$ ASNR, France, jean.baccou@asnr.fr, tony.glantz@asnr.fr\\
$^{4}$ Aix Marseille Univ, CNRS, Centrale Marseille, I2M, Marseille, France,
thibaut.le-gouic@centrale-marseille.fr, jacques.liandrat@centrale-marseille.fr \\
$^{5}$ Univ. Lille, CNRS, UMR 8524 - Laboratoire Paul Painlevé, F-59000 Lille, France ,francois.bachoc@univ-lille.fr
}
\end{center}
\bigskip

\begin{abstract}
This work addresses the interpolation of probability measures within a spatial statistics framework. We develop a Kriging approach in the Wasserstein space, leveraging the quantile function representation of the one-dimensional Wasserstein distance. To mitigate the inaccuracies in semivariogram estimation that arise from sparse datasets, we combine this formulation with cross-validation techniques. In particular, we introduce a variant of the virtual cross-validation formulas tailored to quantile functions. The effectiveness of the proposed method is demonstrated on a controlled toy problem as well as on a real-world application from nuclear safety.

\end{abstract}
\noindent {\bf Keywords.} Kriging, Probability measure, Wasserstein distance, Nuclear safety, Cross validation\\

\textbf{Mathematics Subject Classification.} 62M30 49Q22
\newpage

\section{Introduction}
Probability measure-valued data interpolation plays an important role in many applications. It arises for example in fluid mechanics to analyse the dynamics of complex phenomena \cite{hugMultiphysicsOptimalTransportation2015b} or in image processing for histogram interpolation \cite{bonneel_wasserstein_2016}. The key ingredient is a reformulation of the problem in the optimal transport framework \cite{Santambrogio2015}. Several works such as \cite{pmlr-v130-chewi21a} have also exploited the connection between interpolation and optimal transport for smooth interpolation of probability distributions. \\ 
Kriging \cite{Cressie2015} is also a popular approach to perform smooth interpolation of spatially correlated processes. It is widely employed in various disciplines such as environment monitoring or natural resources evaluation (see \cite{goovaertsGeostatisticsNaturalResources1997} for some examples) but also in industrial applications involving complex computer code simulations \cite{marminWarpedGaussianProcesses2018a}. One of the specificity of Kriging is to allow estimating the spatial correlation via the so-called correlation function or semivariogram before its integration in the construction of a predictor. However, in its original formulation, Kriging is restricted to real-valued processes. 
To tackle the prediction of more complex objects, a branch of spatial statistics called object-oriented spatial statistics has been introduced \cite{Menafoglio2017} and 
several works have proposed to extend classical Kriging to this framework essentially for $L_2$ or Hilbert space-valued process \cite{caballeroUniversalKrigingApproach2013, menafoglioUniversalKrigingPredictor2013,delicadoStatisticsSpatialFunctional2010}.
More recently, some authors have considered an extension of Kriging to non-Hilbert spaces such as Wasserstein space (e.g. \cite{gouet2015}) which is particularly suitable for probability measure interpolation. For probability measures on $\RR$, \cite{Balzanella2020} focuses on quantile function Kriging that keeps the linearity of the predictor. 
In this paper, we follow the same track as of \cite{Balzanella2020} but extend the approach  to the case of sparse observations that strongly limit the accuracy of the semivariogram estimation. To achieve that, a quantile function Kriging problem is combined with the use of cross validation techniques \cite{bachoc2013cross}. \\   
Our paper is organized as follows: Section \ref{sec:overview} is devoted to an overview of Kriging of real-valued data. Then, we introduce in Section \ref{sec:3} an extension in the case of probability measures following \cite{Balzanella2020}. To overcome the limitation of semivariogram estimation in presence of sparse observations, Section \ref{sec:cv} is focused on cross validation. In particular, we propose a formulation of the classical virtual cross validation formulas \cite{Dubrule1983, bachoc2013cross,Ginsbourger2021} suitable to deal with probability measures. Finally, Section \ref{sec:applications} provides some numerical applications on a toy example and on real data coming from nuclear safety studies involving a computationnally costly computer code.

\section{Overview on Kriging of real-valued data}
\label{sec:overview}

 In this section, it is considered that $n$ observations $(y(x_1),...,y(x_n))$ are available. They are viewed as the evaluations of an unknown real-valued function $x \in \mathcal{D}\subseteq\RR^d \mapsto y(x) \in \RR$ with $d \geq 1$ and the goal is to estimate $y(x^\star)$ at a non-observed position $x^\star \in \mathcal{D}$. 

\subsection{Kriging problem}
    The core idea of Kriging is to assume that the data $(y(x_1),...,y(x_n))$ are realizations of a real-valued spatially correlated stochastic process \cite{Cressie2015} that is written:    
    
    $$\mathbf{Y} (x)=m(x)+\mathbf{Z}(x)$$

    where $m$ is the mean of $\mathbf{Y}$ and $\mathbf{Z}$ is a zero-mean spatially correlated process. In this paper, we focus on ordinary Kriging that considers an unknown constant mean. Moreover, it is assumed that $\mathbf{Y}$ is isotropic and stationary \footnote{ Stationarity is often limited to second-order stationarity. We refer to \cite{Cressie2015} for further details.}.\\
    The spatial correlation is classically characterized by the semivariogram:

\begin{defi}
    Given a stochastic process $\{\mathbf{Y}(x),x\in\mathcal{D}\}$, its semivariogram is defined by, 
    for $x,x' \in \mathcal{D}$ $$\gamma(x,x'):=\frac{1}{2}\mathbb{E}\left[ \left|\mathbf{Y}(x)-\mathbf{Y}(x')\right|^2\right].$$  
    If $\mathbf{Y}$ is isotropic and stationary, the semivariogram only depends on the distance $\|x-x'\|=h$ with $h \in\RR$ and $\|.\|$ a suitable (classically Euclidian) norm in $\RR^d$. With a slight abuse of notations, the semivariogram can be written: 
    
    $$\gamma(h)=\frac{1}{2}\mathbb{E}_{\|x-x'\|=h}\left[ \left|\mathbf{Y}(x)-\mathbf{Y}(x')\right|^2\right].$$
\end{defi}

There is a straightforward connection between the semivariogram and a covariance function that is classically used in statistics to characterize the spatial correlation of a stochastic process.  

\begin{defi}
    Let $\{\mathbf{Y}(x),x\in\mathcal{D}\}$ be an isotropic and stationary random process with constant mean and $K$ be its associated covariance function with $K(x,x')=cov(\mathbf{Y}(x),\mathbf{Y}(x')$ for $x,x'\in\mathcal{D}$. Then the semivariogram can be computed by $$\gamma(h) = K(0) - K(h).$$
\end{defi}

Given $n$ random variables $(\mathbf{Y}(x_1),...,\mathbf{Y}(x_n))$ coming from the stochastic process, the ordinary Kriging estimator of $\mathbf{Y}$ at a new point $\x \in \mathcal{D}$ denoted by $\hat{Y}(\x)$ is the Best Linear Unbiased Predictor (BLUP) written as:
\begin{align}
    \mathbf{\Y}(\x) = \sum_{i=1}^n \bar{\lambda}_i \mathbf{Y}(x_i)
    \label{krig_pred_R}
\end{align}
where
\begin{align}
    \bar{\lambda} = \argmin_{\lambda=(\lambda_1,...,\lambda_n)} \left\{ \EE\left[\left|\mathbf{Y}(\x)-\mathbf{\Y}_{\lambda}(\x)\right|^2\right],\sum_{i=1}^n \lambda_i = 1 \right\},
    \label{lambda_krig_R}
\end{align}
with $\mathbf{\Y}_{\lambda}(\x)=\sum_{i=1}^n \lambda_i .\mathbf{Y}(x_i)$

An estimation of $y(x^\star)$ denoted $\hat{y}(x^\star)$ is therefore given by (\ref{krig_pred_R}) when replacing $\mathbf{Y}(x_i)$ by $y(x_i)$. This estimation can also be interpreted as a barycenter:

\begin{align}
    \hat{y}(\x)=\argmin_{y \in \RR}\left\{\sum_{i=1}^{n} \bar \lambda_i |y(x_i) - y|^2\right\}.
    \label{bary_R}
\end{align}
The existence and uniqueness are verified in \ref{app: proof of existence and uniqueness }.\\
This reformulation will be used for the extension of Kriging to probability measure-valued data in Section \ref{sec:3}. \\

By introducing a Lagrange multiplier $\alpha \in \RR$, we can show that $\bar{\lambda}$ is the solution of the following system:
\begin{equation}
\begin{bmatrix} \Gamma & \mathbb{1}_n \\ \mathbb{1}_n^\top & 0 \end{bmatrix}
\begin{bmatrix} \mathbb{\lambda} \\ \alpha \end{bmatrix}
= \begin{bmatrix} \gamma^{\star} \\ 1 \end{bmatrix}, \\    
\label{krig_system_R}
\end{equation}

where $\Gamma$ is the $n\times n$ matrix with $\Gamma_{i,j} = \gamma(|x_i-x_j|)$, $\gamma^{\star}$ is the column vector of size $n$ with $(\gamma^{\star})_i =\gamma(|x_i-\x|) $ and $\mathbb{1}_n$ is the column vector of ones. See \cite{Stein1999} for further details. \\

 The key step of the Kriging predictor construction is the identification of the spatial structure which is addressed in the following section.

\subsection{Identification of spatial structure}

 This identification is based on an estimation from the observations of the process. We recall in the two next sections two classical approaches to perform this estimation.

\subsubsection{Experimental semivariogram estimation}
\label{sec_expvario}

A common approach in geostatistics is to fit a semivariogram model to the experimental (or empirical) semivariogram obtained from the observations.

\begin{defi}
\label{definition N(h)}
    Given $n$ realisation $(y(x_1),...,y(x_n))$ of an isotropic and stationary real valued stochastic process $\{\mathbf{Y}(x),x\in\mathcal{D}\}$, the experimental semivariogram is defined by
    $$ \gamma_{exp}(h) = \frac{1}{2 Card(N(h))} \sum_{(i,j)\in N(h)}|y(x_i)-y(x_j)|^2$$

    where $N(h)=\{(i,j) \in \{1,...,n\}^2 \mid \ h-\epsilon \leq || x_i-x_j || \leq h+\epsilon\}$ ($\epsilon$ is a training parameter).\\

\end{defi}
 The candidates for the semivariogram fitting are then chosen within a family of valid semivariogram models. In this paper, we consider the Matérn semivariogram model \cite{Stein1999} that is flexible to take into account a large variety of spatial structures.

\begin{defi} 
The Matérn semivariogram model is written:

$$\gamma_{\sigma,l,\nu}(h) = \sigma^2\left(1- \frac{2^{1-\nu}}{\Gamma(\nu)} \left( \sqrt{2\nu}\frac{h}{l} \right)^\nu K_\nu \left( \sqrt{2\nu}\frac{h}{l} \right)\right)$$

where \( \sigma^2 \) is the standard deviation parameter, \( \nu \) is the smoothness parameter and \( l \) is the length-scale parameter. Moreover, \( K_\nu \)  is a modified Bessel function  of second order and \( \Gamma(\nu) \) is the gamma function.

\end{defi}
\begin{figure}[H]
    \centering
    \includegraphics[width=0.75\linewidth]{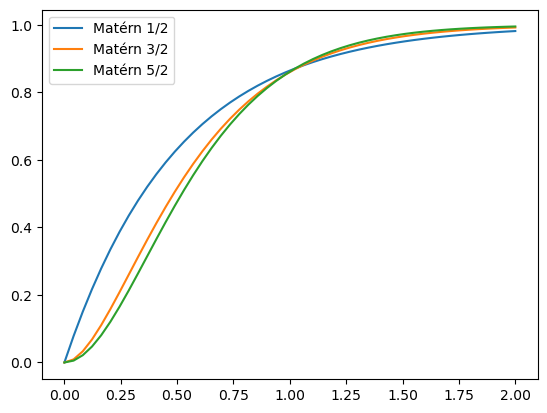}
    \caption{Matérn semivariogram models with $\sigma^2=1$,$l=0.5$ and $\nu = \frac{1}{2},\frac{3}{2},\frac{5}{2}$.}
    \label{fig:Matern_var}
\end{figure}

Figure \ref{fig:Matern_var} illustrates the Matérn semivariogram for the most classical choices for the smoothness parameter $\nu$, $\{\frac{1}{2},\frac{3}{2},\frac{5}{2}\}$. Usually, this parameter is fixed to some $\nu_0$, and the optimization is performed over $\sigma$ and $l$ by minimizing $$ \sum_{h\in \tilde{H}_{\epsilon}} \left[ \gamma_{exp}(h) - \gamma_{\sigma,l,\nu_0}(h) \right]^2$$
 where $\tilde{H}_{\epsilon} $ is a finite set of distance set that are chose as center of the bins \cite{Cressie2015}.
A major limitation of this approach lies in the challenge of accurately estimating the semivariogram for small distances when only a limited number of observations are available which is the common situation when dealing with computationally costly simulators. In this case, the covariance function of the process is estimated instead of the semivariogram by likelihood optimization  under a Gaussian process assumption. 

 \subsubsection{Maximum likelihood estimator under Gaussian process assumption}
 Here it is assumed that the random process $\mathbf{Y}$ is Gaussian with unknown constant  mean $m$ and covariance $K$. As previously, we can consider a covariance model, and a classic choice is to suppose the Matérn one. The difference compared to previously lies in the fitting method. In the Gaussian case, the smoothness parameter is still fixed to some $\nu_0$ but the other parameters are estimated by maximizing the log likelihood(\cite{Stein1999},\cite{Rasmussen2006Gaussian}) that is written for $\theta = (\sigma,l)$:
 \begin{multline*}
 l_{\nu_0}(\theta,m;\mathbf{Y}(x_1),...,\mathbf{Y}(x_n)) =  -\frac{1}{2} (\mathbf{Y}_n - m\mathbb{1}_n)^\top\mathbf{K}_{\nu_0,\theta}^{-1}(\mathbf{Y}_n - m\mathbb{1}_n)\\
 - \frac{n}{2}log(2\pi) - \frac{1}{2}log(det(\mathbf{K}_{\nu_0,\theta})),
\end{multline*}

where $\mathbf{Y}_n = (\mathbf{Y}(x_1),...,\mathbf{Y}(x_n))^\top$ and $\mathbf{K}_{\nu_0,\theta}$ is the covariance matrix of the observations, $(\mathbf{K}_{\nu_0,\theta})_{i,j} = K_{\nu_0,\theta}(x_i,x_j)$. Here $K_{\theta}$ is the covariance function corresponding to the parameter $\theta$.
One way to simplify the maximization of the log likelihood is to notice that for any fixed $\theta$, $l_{\nu_0}(\theta,m)$ is maximized by taking $\hat{m}_{\nu_0}(\theta) = \frac{\mathbb{1}^\top\mathbf{K}_{\nu_0,\theta}^{-1} \mathbf{Y}_n }{\mathbb{1}_n^\top \mathbf{K}_{\nu_0,\theta}^{-1}\mathbb{1}_n}$.
$\theta$ can now be optimized by taking the gradient of the log likelihood with $\hat{m}_{\nu_0}(\theta)$ injected, equal to zero. More details can be found in \cite{Marrel2008}\\

In the following section, the classical Kriging framework is extended to the case of probability measures prediction.

\section{Kriging of probability measure-valued processes}
\label{sec:3}

Let us first denote by $\mathcal{P}_2(\RR) $ the set of probability measures on $\RR$ with finite second moment: $\mathcal{P}_2(\RR)= \left\{ \mu \text{ proba measure} \mid \int_{\RR} x^2 \, d\mu(x) < \infty  \right\}$. \\
In this section, it is considered that $n$ measures of $\mathcal{P}_2(\RR)$, $(\mu(x_1),...,\mu(x_n))$, are available. They are viewed as the evaluations of an unknown $\mathcal{P}_2(\RR)$-valued function $x \in \mathcal{D}\subseteq\RR^d \mapsto \mu(x) \in \mathcal{P}_2(\RR)$ with $d \geq 1$ and the goal is to estimate $\mu(x^\star)$ at a non-observed position $x^\star \in \mathcal{D}$. \\

Some key notions related to probability measures on $\RR$ are first recalled. They are exploited for the construction of the Kriging extension described in Section \ref{sec:K_extension} following the work of \cite{Balzanella2020}.

\subsection{Probability measures on $\RR$}

We first recall the definition of the quantile function.
\begin{defi}
    Let $\mu$ be a probability measure of $\mathcal{P}_2(\RR) $ with cumulative distribution function defined by
$$
F_\mu(x) = \mu((-\infty, x]).
$$
The quantile function  $Q_\mu : (0,1) \to \mathbb{R}$ associated with  $\mu$ is defined as
$$
Q_\mu(\xi) = \inf \left\{ x \in \mathbb{R} : \mu((-\infty, x]) \geq \xi \right\}, \quad \text{for } \xi \in (0,1).
$$
\end{defi}
This function returns the smallest real number \( x \) such that the measure \( \mu \) of the interval \( (-\infty, x] \) is at least \( \xi \). It serves as a generalized inverse of the cumulative distribution function.\\

A relevant distance between two probability measures is the Wasserstein distance of order 2  coming from the theory of optimal transport \emph{\cite{Ambrosio2008}}. It is provided by the following definition.

\begin{defi}
\label{def_W}
    Given $\mu,\nu \in \mathcal{P}_2(\RR)$, the Wasserstein distance of order 2 is defined by:
    $$W_2(\mu, \nu) = \inf_{\pi \in \Pi(\mu, \nu)} \left( \int_{\mathcal{\RR} \times \mathcal{\RR}} \|x -  y \|^2 \, d\pi(x, y) \right)^{\frac{1}{2}},$$
where $\Pi(\mu,\nu)$ is the set of all couplings (joint distributions on $\RR\times\RR$) with marginals $\mu$ and $\nu$.\\

    This distance can be also evaluated from quantile functions:
    $$W_2(\mu, \nu) = \|Q_\mu-Q_\nu\|_{L_2([0,1])}=\left(\int_0^1 \left( Q_\mu(\xi) - Q_\nu(\xi)\right)^2 d\xi\right)^{\frac{1}{2}}.$$ 
\end{defi}

\subsection{Kriging extension to probability measure-valued data}
\label{sec:K_extension}

The key ingredient of this extension is the reformulation $\eqref{bary_R}$ of the Kriging predictor as a barycenter. Replacing $|.|$ on $\RR$ by the Wasserstein distance on $\mathcal{P}_2(\RR)$, an estimation of $\mu(x^\star)$ can be formally introduced as the following Wasserstein barycenter:

\begin{align}
    \hat{\mu}(\x) = \argmin_{\mu \in \mathcal{P}_2(\RR)}\left\{\sum_{i=1}^{n} \bar \lambda_i W_2^2(\mu(x_i),\mu)\right\},
    \label{wass_krig}
\end{align}

and a first idea to perform the extension would be to consider $\mathcal{P}_2(\RR)$-valued processes as suggested in \cite{gouet2015}. This estimation depends on the weights $\bar\lambda_1,\dots,\bar\lambda_n \in\RR$, that are addressed below. In the case of probability measures on $\RR$, a more tractable computation relies on the construction of a barycenter of quantile functions instead of probability measures since the Wasserstein distance of order $2$ induces a $L_2([0,1])$ geometry on quantile functions (Definition \ref{def_W}). \\

The advantage of working with quantile functions is that the $L_2([0,1])$-barycenter of a set of observed quantile functions is a linear combination of the observed quantile functions. Therefore, the construction of the Kriging predictor for real-valued process can be simply extended as follows:

\begin{defi}
\label{prop_ext}
Let $(\mu(x_1),...,\mu(x_n))$ be $n$ observed measures of $\mathcal{P}_2(\RR)$. Assume that the associated observed quantile functions $(Q_{\mu(x_1)},...,Q_{\mu(x_n)})$ come from a $\mathcal{Q}$-valued spatially correlated stochastic process $\{\mathbf{Q}_{\mu(x)}, x \in \mathcal{D}\}$ where $\mathcal{Q}\subset L_2([0,1])$ is the space of quantile functions. For any new location in the input space $\x$, the following Kriging predictor is introduced:

\begin{align}
    \label{Q_estim}
\hat{\mathbf{Q}}_{\mu(\x)}=\sum_{i=1}^n \bar\lambda_i \mathbf{Q}_{\mu(x_i)}
\end{align}
where
\begin{align}
    \bar{\lambda} = \argmin_{\lambda=(\lambda_1,...,\lambda_n)} \left\{ \EE\left[\int_0^1\left(\mathbf{Q}_{\mu(\x)}(\xi)-\hat{\mathbf{Q}}_{\lambda,\mu(\x)}(\xi)\right)^2 d\xi\right],\sum_{i=1}^n \lambda_i = 1 \right\},
    \label{min_system}
\end{align}
with $\hat{\mathbf{Q}}_{\lambda,\mu(\x)} = \sum_{i=1}^n \lambda_i \mathbf{Q}_{\mu(x_i)}$.

\end{defi}

The estimation of $Q_{\mu(\x)}$ is therefore obtained by replacing $\mathbf{Q}_{\mu(x_i)}$ by $Q_{\mu(x_i)}$ in (\ref{Q_estim}).

\begin{rem}
    In order to ensure that the resulting estimator is indeed a quantile function, one can impose a positivity constraint on the Kriging weights $\bar\lambda_1,\dots,\bar\lambda_n$ in \eqref{Q_estim}. This condition guarantees the monotonicity required for any quantile function. However, as shown in the numerical experiments in Section \ref{sec:applications}, this constraint can be overly restrictive for certain types of data. In practice, a suitable post-processing of the estimator’s outputs yields better results, without the need to enforce such a strict condition explicitly.

\end{rem}

\subsection{Semivariogram for probability measures}

As in the real case, a semivariogram for probability measures can be computed by exploiting the $L_2([0,1])$-distance between quantile functions.

\begin{defi}
\label{def_vario_W}
        Given a $\mathcal{Q}$-valued stochastic process $\{\mathbf{Q}_{\mu(x)},x\in\mathcal{D}\}$ associated with a $\mathcal{P}_2(\RR)$-valued function $\mu(x)$, $x\in \mathcal{D}$, then for any $x,x' \in \mathcal{D}$, we define its semivariogram by $$\gamma^{\text{\tiny W}}(x,x'):=\frac{1}{2}\mathbb{E}\left[ \int_0^1\left(\mathbf{Q}_{\mu(x)}(\xi)-\mathbf{Q}_{\mu(x')}(\xi)\right)^2d\xi\right].$$
    If $\{\mathbf{Q}_{\mu(x)}, x \in \mathcal{D}\}$ is isotropic and stationary, the semivariogram for a distance $h \in\RR$ can be written as follows:
    $$\gamma^{\text{\tiny W}}(h)=\frac{1}{2}\mathbb{E}_{\|x-x'\|=h}\left[ \int_0^1\left(\mathbf{Q}_{\mu(x)}(\xi)-\mathbf{Q}_{\mu(x')}(\xi)\right)^2d\xi\right].$$

Moreover, given $n$ observed measures $(\mu(x_1),...,\mu(x_n))$, the semivariogram can be estimated by the following experimental semivariogram:

    \begin{equation}
        \gamma^{\text{\tiny W}}_{exp}(h) = \frac{1}{2 Card(N(h))} \sum_{(i,j)\in N(h)}\left[\int_0^1\left(Q_{\mu(x_i)}(\xi)-Q_{\mu(x_j)}(\xi)\right)^2d\xi\right],
        \label{semivar_exp}
    \end{equation}
where $N(h)$ is as in Definition \ref{definition N(h)}.
\end{defi}

Once the experimental semivariogram has been computed, one may proceed with fitting a parametric model chosen from standard families commonly used in geostatistics (e.g. the Matérn class).\\

Finally, combining Definition \ref{prop_ext} and Definition \ref{def_vario_W}, the Kriging weights are solutions of the same matrix formulation as in the real case: 
\begin{equation}
    \begin{bmatrix} \Gamma_{\text{\tiny W}} & \mathbb{1}_n \\ \mathbb{1}_n^\top & 0 \end{bmatrix}
\begin{bmatrix} \mathbb{\lambda} \\ \alpha \end{bmatrix}
= \begin{bmatrix} \gamma_{\text{\tiny W}}^{\star} \\ 1 \end{bmatrix}, \\
\label{sigma_wass}
\end{equation}

where $\Gamma_{\text{\tiny W}}$ is the $n\times n$ matrix with $(\Gamma_{\text{\tiny W}})_{i,j} = \gamma^{\text{\tiny W}}(|x_i-x_j|)$, $\gamma_{\text{\tiny W}}^{\star}$ is the column vector of size $n$ with $(\gamma_{\text{\tiny W}}^{\star})_i =\gamma^{\text{\tiny W}}(|x_i-\x|) $ and $\mathbb{1}_n$ is the column vector of ones. The originality of such a formulation stands in the semivariogram model that now takes into account the complexity of the measure space. We prove Equation \ref{sigma_wass} in Appendix \ref{app: proof of the Kriging system solution} \\

    As mentioned in Section \ref{sec_expvario}, the experimental semivariogram estimation is known to suffer from significant limitations when the number of available observations is small, leading to unstable or imprecise estimates of the spatial structure. Moreover, in the case of probability measures, maximum likelihood estimation is not applicable as Gaussianity itself is not defined canonically.
    To overcome this limitation, we propose in the following section the use of cross-validation techniques and present some extension to the Kriging of probability measures.
    
\section{Cross validation techniques for Kriging of  probability measures and extension of virtual cross validation formulas}
\label{sec:cv}

Cross validation techniques are widely used in statistical prediction and they typically provide good results, even when a limited number of observations are available. In our context, these methods allow estimating the semivariogram parameters without considering the experimental semivariogram. In the Kriging framework, we are particularly interested in the Leave One Out (LOO) technique.

\subsection{LOO Cross Validation}

Let $(\mu(x_1),...,\mu(x_n))$ be $n$ observed measures of $\mathcal{P}_2(\RR)$. Assume that the associated observed quantile functions $(Q_{\mu(x_1)},...,Q_{\mu(x_n)})$ come from a $\mathcal{Q}$-valued spatially correlated stochastic process $\{\mathbf{Q}_{\mu(x)}, x \in \mathcal{D}\}$ where $\mathcal{Q}\subset L_2([0,1])$ is the space of quantile functions.
    The LOO cross-validation procedure consists in computing the prediction $ \hat{Q}^{(-i)}_{\mu(x_i)} $ of $Q_{\mu(x_i)}$, using the observations at all other locations $\{x_j\}_{j \ne i} $, for each $ i \in \{1, \dots, n\} $, and for fixed model parameters.

For each prediction, an error metric is computed between $ \hat{Q}^{(-i)}_{\mu(x_i)} $ and the true value $Q_{\mu(x_i)}$. The total cross-validation error is then obtained by aggregating these individual errors over all $ i $.

This process is repeated for a collection of candidate parameter values, and the set of parameters minimizing the cross-validation error is selected as optimal for the model.

For the prediction of probability measures, the natural metric to choose is the Wasserstein distance, that is the $L^2$ distance between quantile functions. The total LOO cross-validation error is :
\begin{align}
    MSE_{LOO} = \frac{1}{n} \sum_{i=1}^{n} \int_0^1\left(Q_{\mu(x_i)}(\xi)-\hat{Q}^{(-i)}_{\mu(x_i)}(\xi)\right)^2 d\xi .
    \label{MSE_LOO}
\end{align}

\subsection{Virtual cross validation formulas for quantile functions}
\label{sec:virtual}

One of the limitations of the LOO technique is the computational cost since we need to predict $n$ elements with $n-1$ observations. That means for the Kriging system (\ref{sigma_wass}) to invert $n$ times, a different $(n\times n)$ matrix. We show in the following the extension of virtual cross-validation formulas \cite{bachoc2013cross} for quantile functions Kriging that allows to compute the LOO error by only inverting a $n\times n$ matrix. 

\begin{prop}
    Let $(\mu(x_1),...,\mu(x_n))$ be $n$ observed measures of $\mathcal{P}_2(\RR)$. Assume that the associated observed quantile functions $(Q_{\mu(x_1)},...,Q_{\mu(x_n)})$ come from a $\mathcal{Q}$-valued spatially correlated stochastic process $\{\mathbf{Q}_{\mu(x)}, x \in \mathcal{D}\}$ where $\mathcal{Q}\subset L_2([0,1])$ is the space of quantile functions. Its semivariogram is denoted by $\gamma^W$ and we denote $(\Gamma_{\text{\tiny W}})_{i,j} = \gamma^{\text{\tiny W}}(|x_i - x_j|)$. Then we have:
    \begin{align}
     Q_{\mu(x_i)} - \hat{Q}_{\mu(x_i)}^{(-i)}= \sum_{\substack{j=1}}^{n} \frac{\tilde{\Gamma}_{ij}}{\tilde{\Gamma}_{i,i}} Q_{\mu(x_j)} \label{line:virtual formula}
    \end{align}
with $\tilde{\Gamma} = \Gamma_{\text{\tiny W}}^{-1} - \Gamma_{\text{\tiny W}}^{-1}\mathbb{1}_n(\mathbb{1}_n^\top\Gamma_{\text{\tiny W}}^{-1}\mathbb{1}_n)^{-1}\mathbb{1}_n^\top\Gamma_{\text{\tiny W}}^{-1}$.
\label{Virtual formula}
\end{prop}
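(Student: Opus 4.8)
The plan is to reduce the statement to a purely linear-algebraic identity about the Kriging coefficients and then verify that identity by a short computation with the matrix $\tilde\Gamma$. Throughout I would write $\Gamma := \Gamma_{\text{\tiny W}}$ for brevity and assume, as is implicit in the statement, that $\Gamma$ is invertible. First I would observe that, by Definition \ref{prop_ext} applied to the sub-dataset $(\mu(x_j))_{j\ne i}$ with target location $x_i$, the leave-one-out predictor reads $\hat{Q}_{\mu(x_i)}^{(-i)} = \sum_{j\ne i}\bar\lambda^{(-i)}_j Q_{\mu(x_j)}$, where the weights $\bar\lambda^{(-i)} = (\bar\lambda^{(-i)}_j)_{j\ne i}$ solve the ordinary Kriging system \eqref{sigma_wass} built from the $(n-1)\times(n-1)$ submatrix of $\Gamma$ obtained by deleting row and column $i$ and from the right-hand side $(\gamma^{\text{\tiny W}}(|x_j-x_i|))_{j\ne i}$. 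As in the scalar case, these weights depend only on $\Gamma$ and on the design, not on the observed quantile functions. Consequently the leave-one-out residual is the linear combination
\begin{align}
Q_{\mu(x_i)} - \hat{Q}_{\mu(x_i)}^{(-i)} = \sum_{j=1}^n c_j\, Q_{\mu(x_j)}, \qquad c_i = 1,\quad c_j = -\bar\lambda^{(-i)}_j \ (j\ne i),
\label{eq:plan-residual}
\end{align}
and it remains to prove that $c_j = \tilde\Gamma_{ij}/\tilde\Gamma_{i,i}$ for every $j$, i.e. that $c$ is the $i$-th column of $\tilde\Gamma$ normalized by its diagonal entry.

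Next I would establish two elementary properties of the candidate vector $v := \tilde\Gamma\, e_i$, using only the definition of $\tilde\Gamma$. Since $\tilde\Gamma\mathbb{1}_n = \Gamma^{-1}\mathbb{1}_n - \Gamma^{-1}\mathbb{1}_n(\mathbb{1}_n^\top\Gamma^{-1}\mathbb{1}_n)^{-1}(\mathbb{1}_n^\top\Gamma^{-1}\mathbb{1}_n) = 0$ and $\tilde\Gamma$ is symmetric, we get $\mathbb{1}_n^\top v = e_i^\top\tilde\Gamma\mathbb{1}_n = 0$, i.e. $\sum_j v_j = 0$. Moreover, multiplying out,
\begin{align*}
\Gamma\tilde\Gamma = I_n - \mathbb{1}_n(\mathbb{1}_n^\top\Gamma^{-1}\mathbb{1}_n)^{-1}\mathbb{1}_n^\top\Gamma^{-1},
\end{align*}
so that $\Gamma v = e_i - \beta_i\,\mathbb{1}_n$ with the scalar $\beta_i := (\mathbb{1}_n^\top\Gamma^{-1}\mathbb{1}_n)^{-1}(\Gamma^{-1}\mathbb{1}_n)_i$. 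In particular $(\Gamma v)_j = -\beta_i$ for every $j\ne i$: the vector $\Gamma v$ is constant off its $i$-th coordinate.

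Finally I would set $c := v/\tilde\Gamma_{i,i}$ and check that $(-c_j)_{j\ne i}$ solves the reduced ordinary Kriging system, whence $-c_j = \bar\lambda^{(-i)}_j$ by uniqueness of that system; together with $c_i = v_i/\tilde\Gamma_{i,i} = 1$ this gives exactly \eqref{eq:plan-residual} with $c_j = \tilde\Gamma_{ij}/\tilde\Gamma_{i,i}$, i.e. the claim. The unbiasedness constraint $\sum_{j\ne i}(-c_j) = 1$ follows from $\sum_j v_j = 0$ and $c_i = 1$. For the stationarity equations, for $j\ne i$ one has $\sum_{k\ne i}(-c_k)\gamma^{\text{\tiny W}}(|x_j-x_k|) = -(\Gamma c)_j + c_i\,\gamma^{\text{\tiny W}}(|x_j - x_i|) = \beta_i/\tilde\Gamma_{i,i} + \gamma^{\text{\tiny W}}(|x_j-x_i|)$ by the previous paragraph, so the $n-1$ Kriging equations hold with Lagrange multiplier $-\beta_i/\tilde\Gamma_{i,i}$.

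The computations above are routine; the only genuine point requiring care is non-degeneracy: one needs $\Gamma$ invertible, the $(n-1)\times(n-1)$ bordered Kriging matrix at $x_i$ invertible (so that $\bar\lambda^{(-i)}$ is well defined and unique), and $\tilde\Gamma_{i,i}\ne 0$ (so that the normalization $c = v/\tilde\Gamma_{i,i}$ makes sense). These hold under the standing assumptions making all the Kriging systems of the paper well posed, and I would state them explicitly at the start; if desired one can further remark that $\tilde\Gamma_{i,i}\ne 0$ is in fact equivalent to the solvability of the leave-one-out system at $x_i$.
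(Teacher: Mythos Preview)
Your argument is correct, but it follows a genuinely different route from the paper. The paper proceeds \`a la Dubrule: it rewrites the Kriging predictor in ``dual'' form $\hat Q(\xi)=\beta(\xi)+\sum_j b_j(\xi)\gamma^{\text{\tiny W}}(\x-x_j)$, identifies the $(b,\beta)$'s as solutions of the bordered system with matrix $\bigl[\begin{smallmatrix}\Gamma_{\text{\tiny W}}&\mathbb{1}_n\\ \mathbb{1}_n^\top&0\end{smallmatrix}\bigr]$, then embeds the reduced $(n-1)$-point system back into the full $(n+1)\times(n+1)$ system by inserting a $0$ in position $i$ of the unknown and $\hat Q_i(\xi)$ in position $i$ of the right-hand side; inverting the bordered matrix via its Schur complement reveals $\tilde\Gamma$ as the upper-left block and the $i$-th row of the inverted system gives the formula. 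You instead stay entirely in the primal: you write the LOO residual as $\sum_j c_j Q_{\mu(x_j)}$ with $c_i=1$, $c_j=-\bar\lambda^{(-i)}_j$, compute directly that $\tilde\Gamma\mathbb{1}_n=0$ and $\Gamma\tilde\Gamma=I_n-\mathbb{1}_n(\mathbb{1}_n^\top\Gamma^{-1}\mathbb{1}_n)^{-1}\mathbb{1}_n^\top\Gamma^{-1}$, and then verify by substitution that $(-c_j)_{j\ne i}$ with $c=\tilde\Gamma e_i/\tilde\Gamma_{i,i}$ satisfies both the unbiasedness constraint and the $n-1$ stationarity equations of the reduced Kriging system, with Lagrange multiplier $-\beta_i/\tilde\Gamma_{i,i}$. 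Your approach is shorter and avoids introducing the dual coefficients and the block-inverse machinery; the paper's approach is the classical one and has the advantage of making the structure of the bordered system explicit, which is reused immediately afterwards in the proof of Corollary~\ref{corollary}. Your explicit mention of the non-degeneracy hypotheses ($\Gamma$ invertible, $\tilde\Gamma_{i,i}\neq 0$, uniqueness in the reduced system) is a welcome addition that the paper leaves implicit.
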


Starting from \eqref{line:virtual formula}, the evaluation of the $L^2$ norm leads to \eqref{MSE_LOO}.
The proof, detailed in Appendix \ref{app:proof of virtual formulas}, is an extension of the one provided by \cite{Dubrule1983}.

\begin{rem}
    From a computational standpoint, evaluating the LOO cross-validation error using the naive approach requires inverting $n$ matrices of size $n\times n$. In contrast, the use of virtual LOO formulas requires only one inversion of a $n\times n$ matrix. The resulting computational gain is illustrated in Section \ref{subsec:verif_cv}.
\end{rem}

\subsection{Scale parameter}

The semivariogram models we are using are of the form $\gamma_{\sigma,\phi}(h)=\sigma^2\gamma_{\phi}(h)$. Since each $\hat{Q}^{(-i)}_{\mu(x_i)}$ only depends on $\phi$ not on $\sigma^2$, the scale parameter $\sigma^2$ of the semivariogram model is not optimized during the LOO process.
As described in \cite{bachoc2013cross} and \cite{Cressie2015} we can estimate the scale parameter by:
\begin{align}
    \hat{\sigma}^2 = \frac{1}{n}\sum_{i=1}^n\frac{\int_0^1\left(Q_{\mu(x_i)}(\xi)-\hat{Q}^{(-i)}_{\mu(x_i)}(\xi)\right)^2d\xi}{\hat\sigma_i^2},
\end{align}
where $\hat\sigma_i^2 =\EE_{\sigma^2=1,\phi=\hat{\phi}} \left[\int_0^1\left(Q_{\mu(x_i)}(\xi)-\hat{Q}^{(-i)}_{\mu(x_i)}(\xi)\right)^2d\xi\right]$.\\
$\EE_{\sigma^2=1,\phi=\hat{\phi}}$ means that the expectation is computed with the prediction made with $\sigma^2=1,\phi=\hat{\phi}$.

Another advantage of the virtual cross-validation formulas is the direct availability of the estimator $\hat\sigma_i^2$. 
\begin{cor}
    Under the assumptions of Proposition \ref{Virtual formula}:
    \begin{align}
        \hat\sigma_i^2 = - \frac{1}{\tilde{\Gamma}_{i,i}}.
    \end{align}
\label{corollary}
\end{cor}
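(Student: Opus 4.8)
The plan is to combine the virtual cross-validation identity of Proposition~\ref{Virtual formula} with the classical variance-of-contrasts identity of geostatistics, applied in the $L_2([0,1])$-valued setting. First I would observe that the formula \eqref{line:virtual formula} is a purely algebraic statement about the leave-one-out Kriging weights, which depend only on $\Gamma_{\text{\tiny W}}$ and not on the data; hence the very same linear combination holds with the random fields in place of the observed quantile functions,
\[
\mathbf{Q}_{\mu(x_i)} - \hat{\mathbf{Q}}_{\mu(x_i)}^{(-i)} = \sum_{j=1}^n c_j\,\mathbf{Q}_{\mu(x_j)}, \qquad c_j := \frac{\tilde{\Gamma}_{ij}}{\tilde{\Gamma}_{ii}}.
\]
Since $\tilde{\Gamma} = \Gamma_{\text{\tiny W}}^{-1} - \Gamma_{\text{\tiny W}}^{-1}\mathbb{1}_n(\mathbb{1}_n^\top\Gamma_{\text{\tiny W}}^{-1}\mathbb{1}_n)^{-1}\mathbb{1}_n^\top\Gamma_{\text{\tiny W}}^{-1}$ annihilates $\mathbb{1}_n$, the vector $c$ is a contrast, i.e.\ $\sum_{j=1}^n c_j = 0$ (equivalently, this just records that the leave-one-out weights sum to one).

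Next I would substitute this representation into the definition of $\hat\sigma_i^2$, all quantities being evaluated under the model with $\sigma^2=1$ and $\phi=\hat\phi$ that produced the predictor. Writing the quantile process as $\mathbf{Q}_{\mu(x)} = m + \mathbf{Z}_x$ with $m$ the (constant-in-$x$) mean function and $\mathbf{Z}_x$ centered, the contrast kills the constant mean, and expanding the square together with the relation $\gamma^{\text{\tiny W}}(h) = C(0) - C(h)$ — where $C(x,x') := \int_0^1 \mathrm{cov}\big(\mathbf{Q}_{\mu(x)}(\xi),\mathbf{Q}_{\mu(x')}(\xi)\big)\,d\xi$ is the integrated covariance, depending only on $h=\|x-x'\|$ by stationarity, the exact analogue of $\gamma(h)=K(0)-K(h)$ — yields
\begin{align*}
\hat\sigma_i^2
&= \EE_{\sigma^2=1,\phi=\hat\phi}\!\left[\int_0^1\Big(\sum_{j=1}^n c_j\,\mathbf{Q}_{\mu(x_j)}(\xi)\Big)^{2} d\xi\right] \\
&= -\sum_{j,k=1}^n c_j c_k\,\gamma^{\text{\tiny W}}(|x_j - x_k|)
= -\frac{(\tilde{\Gamma}\,\Gamma_{\text{\tiny W}}\,\tilde{\Gamma})_{ii}}{\tilde{\Gamma}_{ii}^{2}},
\end{align*}
where the last equality uses the symmetry of $\tilde{\Gamma}$.

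Finally I would close the argument with the algebraic identity $\tilde{\Gamma}\,\Gamma_{\text{\tiny W}}\,\tilde{\Gamma} = \tilde{\Gamma}$: the matrix $P := \Gamma_{\text{\tiny W}}\tilde{\Gamma} = I_n - \mathbb{1}_n(\mathbb{1}_n^\top\Gamma_{\text{\tiny W}}^{-1}\mathbb{1}_n)^{-1}\mathbb{1}_n^\top\Gamma_{\text{\tiny W}}^{-1}$ is idempotent, and a short expansion (using $\Gamma_{\text{\tiny W}}^{-1}\mathbb{1}_n\,(\mathbb{1}_n^\top\Gamma_{\text{\tiny W}}^{-1}\mathbb{1}_n)^{-1}\mathbb{1}_n^\top\Gamma_{\text{\tiny W}}^{-1}\mathbb{1}_n = \Gamma_{\text{\tiny W}}^{-1}\mathbb{1}_n$) gives $\tilde{\Gamma}P = \tilde{\Gamma}$. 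Hence $(\tilde{\Gamma}\,\Gamma_{\text{\tiny W}}\,\tilde{\Gamma})_{ii} = \tilde{\Gamma}_{ii}$ and $\hat\sigma_i^2 = -1/\tilde{\Gamma}_{ii}$, as claimed.

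I expect the only non-mechanical point to be the second step: rigorously justifying the variance-of-contrasts identity in the quantile-valued framework, namely introducing the integrated covariance $C$, checking $\gamma^{\text{\tiny W}}(h) = C(0) - C(h)$ under the stationarity and constant-mean assumptions on $\{\mathbf{Q}_{\mu(x)}\}$, and making sure the expectation defining $\hat\sigma_i^2$ is taken under the same parameters $(\sigma^2=1,\phi=\hat\phi)$ used in the LOO predictor. The remaining ingredients ($\tilde{\Gamma}\mathbb{1}_n = 0$, idempotence of $P$, and $\tilde{\Gamma}P=\tilde{\Gamma}$) are routine linear algebra already underlying the proof of Proposition~\ref{Virtual formula}.
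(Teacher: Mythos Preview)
Your proof is correct and follows essentially the same route as the paper: insert the virtual residual \eqref{line:virtual formula} into the definition of $\hat\sigma_i^2$, use the contrast property $\tilde\Gamma\mathbb{1}_n=0$ to reduce the expected squared $L_2$-norm to $-c^\top\Gamma_{\text{\tiny W}}c$, and finish with linear algebra. The only cosmetic difference is the last step: the paper invokes the block-inverse relation $\Gamma_{\text{\tiny W}}\tilde\Gamma=I_n-\mathbb{1}_nB^\top$ and then kills the $B$-term via $\tilde\Gamma\mathbb{1}_n=0$, whereas you go straight to the projection identity $\tilde\Gamma\Gamma_{\text{\tiny W}}\tilde\Gamma=\tilde\Gamma$; both are one-line computations from the explicit formula for $\tilde\Gamma$.
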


See Appendix \ref{app:proof of virtual formulas} for the proof.

\section{Numerical applications}
\label{sec:applications}

This section first provides an application on a toy example of the probability measure-valued Kriging predictor with a focus on the numerical advantages of the use of virtual cross validation formulas. Then, in Section \ref{sec:real_data}, we evaluate the performances of our approach on a real test case.

\subsection{Toy example}
\label{subsec:verif_cv}

We consider the function $\mu$ mapping $u,v \in [0,1]\times(0,2]$, to the Gaussian measure in $\mathcal{P}_2(\RR)$ with density $$f:(u,v,.)\mapsto\frac{1}{\sqrt{2\pi v}} exp^{-\frac{1}{2}\frac{(.-u)^2}{v}}.$$

It is assumed that $n$ observations are available. They are obtained from a uniform random sampling in $ [0,1]\times(0,2]$.\\

First, we aim to demonstrate the efficiency of our Kriging model when using the LOO virtual formulas (Section \ref{sec:virtual}). Taking $n=50$, the parameters of a Matérn semi-variogram are estimated on 80\% of the available data (training set) and the remaining 20\% (test set) are subsequently used to assess the predictive performance of the Kriging estimator. For this example we choose a grid of size 100 to estimate the length scale parameter. We estimate the smoothness parameter over the set $\{\frac{1}{2},\frac{3}{2},\frac{5}{2}\}$

Figure \ref{fig:toy_ex} displays several plots of predicted quantile functions on the test set. These prediction are very accurate.

\begin{figure}[H]
    \centering
    \includegraphics[width=1\linewidth]{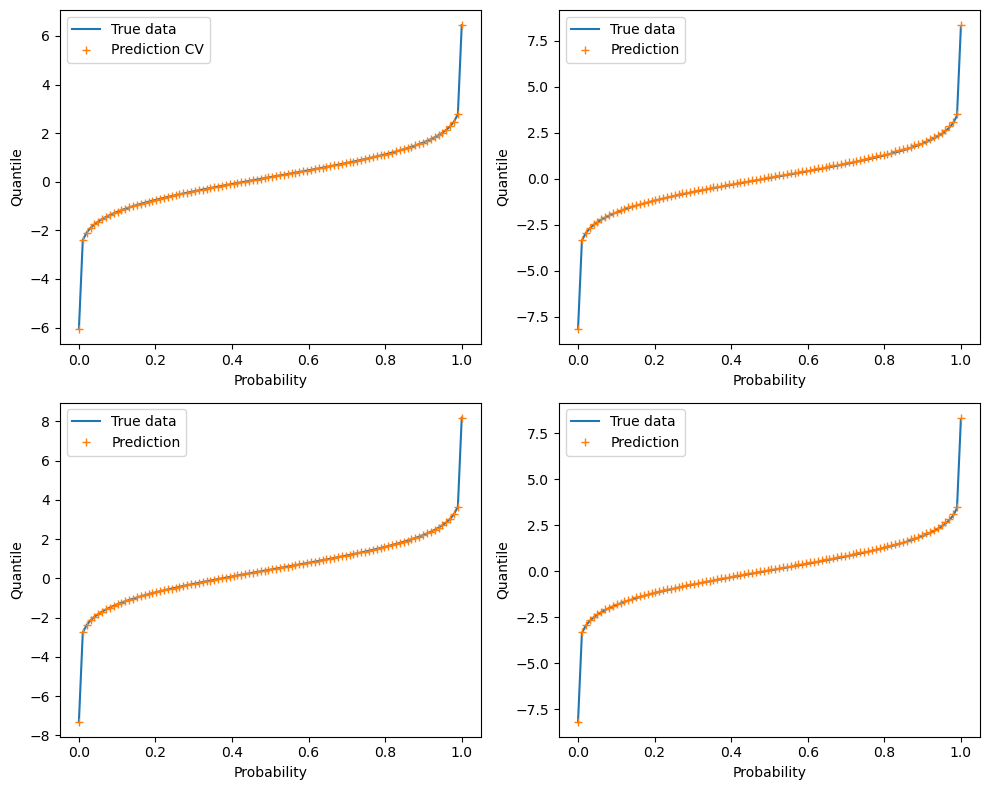}
    \caption{Examples of predicted quantile functions and comparison with true quantile functions.}
    \label{fig:toy_ex}
\end{figure}

The remaining of this section is devoted to a numerical comparison between the classical LOO and the virtual formulas. 
Figure \ref{fig:cv_formulas} shows the cross-validation errors computed using both methods. Each cross represents the cross-validation error for a given couple \textit{(length scale parameter, smoothness parameter)}: the x-coordinate corresponds to the classical method, and the y-coordinate to the virtual-formula method. The points lie along the line $y=x$, indicating that both methods yield identical results.

\begin{figure}[H]
    \centering
    \includegraphics[width=0.75\linewidth]{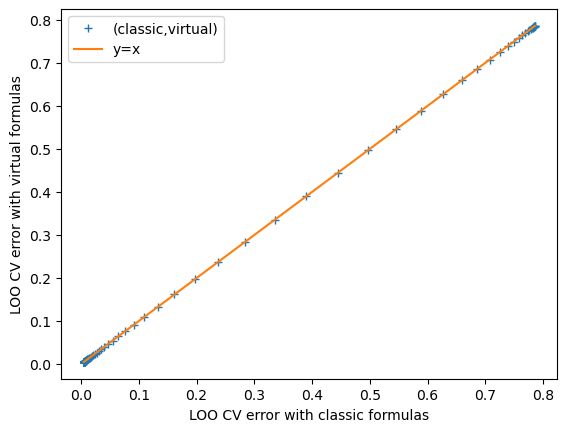}
    \caption{Verification of virtual cross validation formulas.}
    \label{fig:cv_formulas}
\end{figure}

Moreover, Table \ref{tab:comp_time} is focused on the computational cost and reports the time required to compute all cross-validation errors for both methods, as a function of the parameter $n$.

\begin{table}[!h] 
    \centering
    \begin{tabular}{|c|c|c|c|c|c|c|}
    \hline
         n              & 50 & 100 & 200 & 400&600&700\\\hline
         Classical method (s)      & 0.94 & 3.04 & 9.68 & 47.26&195.24&336.14\\\hline
         Virtual formulas method (s)& 0.21 & 0.51 & 1.39 & 5.67&11.58&16.63\\\hline
    \end{tabular}
    \caption{Computation time comparison.}
    \label{tab:comp_time}
\end{table}

This table shows that the virtual cross-validation formula significantly reduces the computational cost with a factor ranging from $4.5$ to $20$ on this example.

\subsection{Nuclear safety test case}
\label{sec:real_data}

\subsubsection{Data description}
This study investigates the numerical simulation of a Loss of Coolant Accident (LOCA) in a Pressurized Water Reactor (PWR), with particular emphasis on the modelling of the core reflooding phenomenon using the DRACCAR simulation code \cite{Bascou2015} developed by the french Autorité de Sûreté Nucléaire et de Radioprotection (ASNR). The test case consists of a time-resolved simulation of the temperature distribution across selected fuel rods in the reactor core, discretized over 100 spatial points (Figure~\ref{carte_temp}). The system configuration is defined by six physical input parameters: temperature of the injected water (\textit{dtsub} in Kelvin), external pressure (\textit{pout} in Pascal), water injection speed (\textit{vin} in meter/second), power injected into each fuel rod (\textit{p1rod} in Watt), power injected into the casing (\textit{pshroud} in Watt) and temperature of the fuel rods at the moment of water injection (\textit{tclad0} in Kelvin). Their variation domains are given in Table \ref{param_phys}. A total of 79 simulations are conducted by varying these parameters according to a random sampling from a uniform law for each input parameter, enabling a systematic exploration of the model's input space.

\begin{table}[!h] 
 \begin{center} 
\begin{tabular}{|c|c|c|c|}
\hline              &   min         &   max         &   mean        \\ \hline
dtsub (K)               &   0.217       &   59.578      &   27.655      \\ \hline
pout ($\times 10^6$) (Pa) &   0.222       &   5.968       &   3.135       \\ \hline
vin (m/s)               &   0.016       &   0.079       &   0.042       \\ \hline
p1rod (W)              &   612.542     &   3440.880    &   2077.480    \\ \hline
pshroud (W)            &   344.640     &   24407.170   &   12738.210   \\ \hline
tclad0 (K)             &   730.183     &   971.501     &   847.148     \\ \hline
\end{tabular}
\caption{Variation domain of the physical input parameters.}
\label{param_phys}
\end{center}
\end{table}

\begin{figure}[H]
    \centering
    \includegraphics[width=0.6\linewidth]{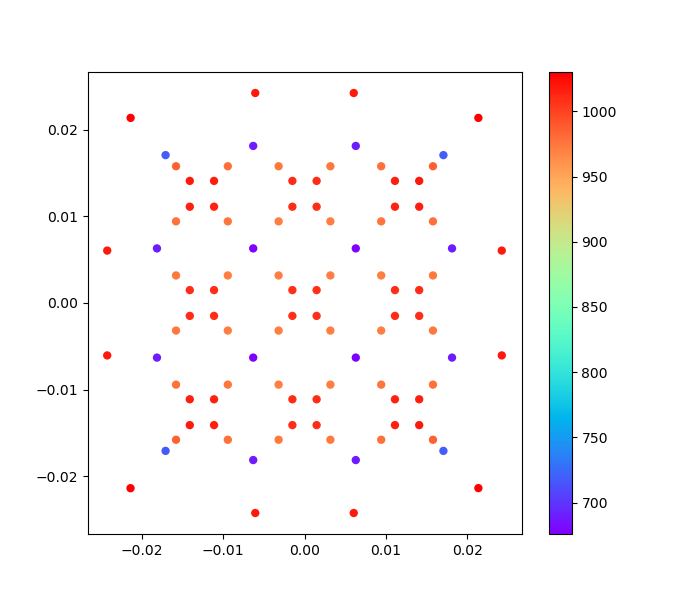}
    \caption{An example of a temperature map generated by DRACCAR (in K).}
    \label{carte_temp}
\end{figure}

 In this study, for each system configuration, we focus on a unique temperature distribution (or map) corresponding to the time when the maximal temperature is reached. In practice, it is difficult to exhaustively explore the physical input space since the computational cost of a DRACCAR simulation can vary from a few minutes to a month depending on the number of considered rods and the type of transient of interest. Therefore, we investigate in the remaining of this section how Kriging can offer an efficient strategy to construct a fast-to-evaluate model that accurately approximates the relationship between DRACCAR outputs of interest and the six physical input parameters. The outputs of interest are here statistical characteristics of the temperature map such as mean or quantiles. Instead of constructing a Kriging predictor of each statistical quantity of interest, we propose to predict the whole discrete probability measure constructed from the temperature values of the map. In the next sections, we evaluate the performances of this approach compared to classical ones.    

\subsubsection{Description of the models}
We study and compare 4 Kriging models. The first one is focused on the prediction of the temperature map exploiting $\RR$-valued data Kriging. The statistical quantities of interest are then computed from the predicted map. The three last models directly operate on the probability measures associated with the temperature maps. Full details are given below:\\

\textbf{Model based on $\RR$-valued data Kriging} (denoted SK\_PCA\_ML)\\
The idea here is to consider the temperature over each discretized point along the inputs as a $\RR$-valued Gaussian process. For this model, the predicted output is a map. An output dimension reduction is first performed by applying a Principal Component Analysis of all the points of the map. Then, a Gaussian process predictor is constructed for each selected principal component assuming a Matérn $\frac{3}{2}$ covariance function and using the maximum likelihood method for its parameter estimation \cite{marrel_2011}.  \\

\textbf{Models based on $\mathcal{P}_2(\RR)$-valued data Kriging}\\
These models are constructed following the general approach introduced in Section \ref{sec:3} and lead to a quantile function predictor. Their specificities are the following: 

\begin{itemize}
    \item Model P\_WK\_LS: its construction is taken from \cite{Balzanella2020} and relies on the estimation of a semivariogram. Moreover, a non-negativity constraint is introduced on the Kriging weights to ensure the output to be a quantile function \cite{Barnes1984}. We use a Matérn $\frac{3}{2}$ model and the least squared method for the semivariogram fitting.
    \item Model WK\_LS: it is the same as the previous model without the non-negativity constraint. We ensure the output to be a quantile function by post processing (sorting\footnote{In our numerical tests, the order relation deviations are small so that sorting does not strongly change the original predicted output.} in our case) the output after the prediction as it is performed in geostatistics when using indicator Kriging \cite{goovaertsGeostatisticsNaturalResources1997}.
    \item Model WK\_CV: its construction relies on the use of the LOO technique for semivariogram parameter estimation (Section \ref{sec:cv}). The type of semivariogram model is Matérn $\frac{3}{2}$ and we also sort the outputs to obtain quantile functions.
\end{itemize}

\subsection{Numerical results and comments}

The comparison is based on $N=100$ independent random splits of the dataset into a training set (63 training samples for each split), on which the model parameters are fitted, and a test set (16 test samples for each split), where the trained model is used to predict new data.

The $4$ Kriging models described in the previous section are considered. For SK\_PCA\_ML, we select the first 3 principal components since the average explained variance ratio on the three first components for all the different train-test sets is $0.996$. For experimental semivariogram-based models (P\_WK\_LS and WK\_LS), the distance subdivision is optimized by cross validation. Finally, the application of the LOO technique in the construction of WK\_CV is based on a grid search of size 100 for the length-scale parameter and of size 3 for the smoothness parameter.

For each split, we compute on the test set the root mean squared error (RMSE) associated with both the mean temperature (denoted $RMSE_{mean}$) and the 95\% quantile of the predicted distributions (denoted $RMSE_{q95}$). Additionally, we evaluated the predictive accuracy using the root mean squared Wasserstein distance (denoted $RMSE_W$) between the predicted and true probability measures. \\

Figures \ref{boxplot_M} to \ref{boxplot_W} display the boxplots of $RMSE_{mean}, RMSE_{q95}$ and $RMSE_{W}$ for each model.

\begin{figure}[H]
    \centering
    \includegraphics[width=0.75\linewidth]{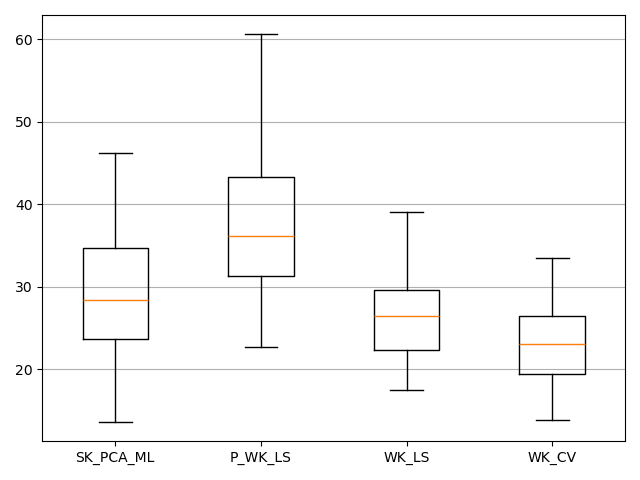} 
    \caption{Boxplots of $RMSE_{mean}$ for each model}
    \label{boxplot_M}
\end{figure}

\begin{figure}[H]
    \centering
    \includegraphics[width=0.75\linewidth]{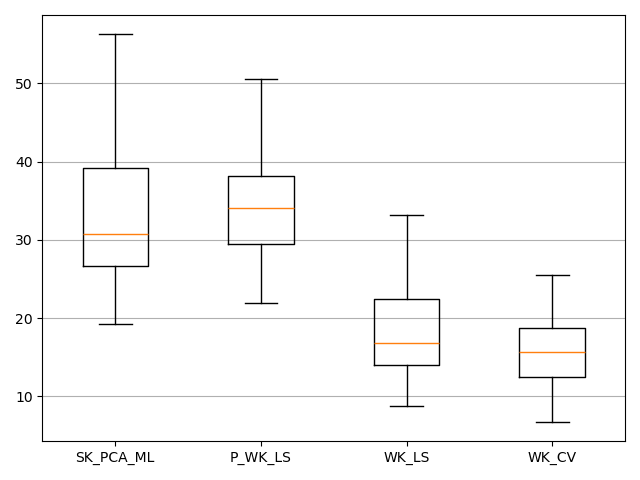}
    \caption{Boxplots of $RMSE_{q95}$ for each model}
    \label{boxplot_Q}
\end{figure}

\begin{figure}[H]
    \centering
    \includegraphics[width=0.75\linewidth]{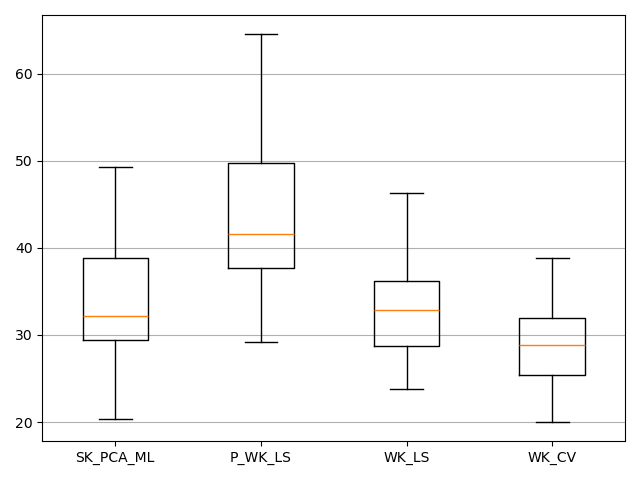}
    \caption{Boxplots of $RMSE_{W}$ for each model}
    \label{boxplot_W}
\end{figure}

We observe on these figures that the errors produced by the second method (PKW\_LS) are significantly larger than those of the other approaches. These results suggest that, for this dataset, the positivity constraint imposed on the weights is too restrictive, especially given that the same model, when applied without this constraint, yields substantially better performances.

Regarding the remaining models, we observe that the two methods based on probability Kriging (KW\_LS and KW\_CV) outperform the classical Kriging approach in $\RR$ especially on the prediction of the 95\% quantile. Furthermore, the final and most notable observation is that the probability-measure Kriging model with parameter estimation via cross-validation (KW\_CV) consistently achieves the best performances across all quantities of interest. 
\section{Conclusion}

A Kriging predictor of probability measure-valued data with sparse observations has been introduced. The originality of its construction is that it combines two main ingredients coming from the geostatistics and computer experiment frameworks. The first ingredient is an extension of classical $\RR$-valued Kriging to the case of probability measures thanks to a reformulation of the Kriging predictor as a barycenter. It therefore allows integrating suitable distance between probability measures such as the Wasserstein distance. In the case of measures on $\RR$ the Kriging problem can be formulated as a quantile function prediction. The second ingredient is the use and the adaptation of cross-validation formulas. It  avoids the estimation of an experimental semivariogram that is not affordable in presence of sparse observations. The numerical applications on the nuclear test case show that the proposed approach outperforms classical ones especially to predict extreme quantiles which are key quantities for nuclear safety. Our current further work deals with the extension of our approach to the case of anisotropic stochastic processes. The development of a Kriging predictor for probability measures on $\RR^p$ with $p>1$ will also be addressed.   

\newpage
\appendix
\section{Existence and Uniqueness of \eqref{bary_R}}
\label{app: proof of existence and uniqueness }
\begin{lem*}
    If $\sum_i^n \bar\lambda_i = 1$, then the function $y\mapsto \sum_i^n \bar\lambda_i|y(x_i) -y|^2$ admits a unique minimum
\end{lem*}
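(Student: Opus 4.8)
The plan is to treat $f(y):=\sum_{i=1}^n\bar\lambda_i\,|y(x_i)-y|^2$ as a quadratic polynomial in the scalar variable $y\in\RR$ and simply read off its leading coefficient. Expanding the square gives
\[
f(y)=\Big(\sum_{i=1}^n\bar\lambda_i\Big)y^2-2\Big(\sum_{i=1}^n\bar\lambda_i\,y(x_i)\Big)y+\sum_{i=1}^n\bar\lambda_i\,y(x_i)^2 .
\]
Under the hypothesis $\sum_{i=1}^n\bar\lambda_i=1$, the coefficient of $y^2$ equals $1>0$, so $f$ is a strictly convex parabola; equivalently $f''(y)=2\sum_{i=1}^n\bar\lambda_i=2>0$ for every $y$. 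A strictly convex, coercive function on $\RR$ attains its infimum at exactly one point, which yields both existence and uniqueness.

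To make the minimizer explicit I would set $f'(y)=0$, i.e. $-2\sum_{i=1}^n\bar\lambda_i(y(x_i)-y)=0$, which gives the vertex $y^\star=\sum_{i=1}^n\bar\lambda_i\,y(x_i)$. This is exactly the Kriging predictor \eqref{krig_pred_R}, so the computation also confirms that the barycentric reformulation \eqref{bary_R} is consistent with the original definition.

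The single point that deserves emphasis — and which I expect to be the only conceptual obstacle — is that the weights $\bar\lambda_i$ are \emph{not} assumed nonnegative, so one cannot argue ``a sum of convex functions is convex'': a summand with $\bar\lambda_i<0$ is concave. Convexity of $f$ is recovered purely from the normalization $\sum_{i=1}^n\bar\lambda_i=1$, which forces the aggregate leading coefficient to be strictly positive. Once this is noticed the argument is a one-line computation, with no estimates required.
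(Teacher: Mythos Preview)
Your argument is correct and is essentially the same as the paper's: both expand $f$ as a quadratic in $y$, observe that the hypothesis $\sum_i\bar\lambda_i=1$ makes the leading coefficient equal to $1>0$, conclude existence and uniqueness from this, and then identify the minimizer by setting $f'(y)=0$. Your additional remark that convexity cannot be argued termwise (since some $\bar\lambda_i$ may be negative) is a useful clarification the paper leaves implicit.
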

\begin{proof}
    It is straightforward to verify that the function is a quadratic polynomial, and that its leading term, $
\sum_{i=1}^n \bar{\lambda}_i y^2 = y^2,$
is positive. That gives automatically the existence and the uniqueness of a minimizer. To find this minimizer, let us derive the function and find the y for which the derivative is equal to 0. The derivative is given by $y\mapsto -2\sum_i^n\bar{\lambda}_i(y(x_i)-y)$.
So under the hypothesis of the lemma
\begin{align*}
-2\sum_i^n\bar{\lambda}_i(y(x_i)-y) = 0 \Leftrightarrow y=\sum_i^n\bar\lambda_i y(x_i) .
\end{align*}
\end{proof}

\section{Details of the resolution of the Kriging problem in $\mathcal{P}_2(\RR) $\eqref{sigma_wass}}
\label{app: proof of the Kriging system solution}
\begin{proof}[Proof of \eqref{sigma_wass}]

The ordinary Kriging estimator minimizes the estimation variance under the unbiasedness constraint ($\sum_{i=1}^n \lambda_i = 1$).
In the case of measure-valued Kriging, we want to solve equation \eqref{Q_estim}. That leads to the following constrained minimization problem:
\begin{align*}
    \min_{\lambda} \EE\left[\int_0^1\left(\mathbf{Q}_{\mu(\x)}(\xi)-\hat{\mathbf{Q}}_{\lambda,\mu(\x)}(\xi)\right)^2 d\xi\right]\\
    s.t.\  \mathbb{1}_n^\top \lambda = 1 \nonumber
\end{align*}

In the following we will denote $\mathbf{Q}_{\mu(\x)}$ by $\mathbf{Q}^{\star}$,$\hat{\mathbf{Q}}_{\lambda, \mu(\x)}$ by $\hat{\mathbf{Q}}^{\star}$ and $\mathbf{Q}_{\mu(x_i)}$ by $\mathbf{Q}_i$. 
Let us develop the expectation assuming that $\mathbb{1}_n^\top \lambda = 1 $:

\begin{align*}
    \EE\left[\int_0^1\left(\mathbf{Q}^{\star}(\xi)-\hat{\mathbf{Q}}^{\star}(\xi)\right)^2 d\xi\right] &= \EE\left[\int_0^1\left(\mathbf{Q}^{\star}(\xi)\right)^2+\left(\hat{\mathbf{Q}}^{\star}(\xi)\right)^2 - 2\mathbf{Q}^{\star}(\xi)\hat{\mathbf{Q}}^{\star}(\xi) d\xi\right]\\
    &= \EE \int_0^1\left(\mathbf{Q}^{\star}(\xi)\right)^2d\xi + \EE \int_0^1 \left(\sum_{i=1}^n \lambda_i \mathbf{Q}_i(\xi)\right)^2d\xi \\
    &\hspace{3cm}-2\EE\int_0^1\mathbf{Q}^{\star}(\xi)\sum_{i=1}^n \lambda_i \mathbf{Q}_i(\xi)d\xi\\
    &=\EE \int_0^1\left(\mathbf{Q}^{\star}(\xi)\right)^2d\xi + \sum_{i=1}^n \sum_{j=1}^n\lambda_i\lambda_j \EE \int_0^1\mathbf{Q}_i(\xi)\mathbf{Q}_j(\xi)d\xi \\
    &\hspace{3cm}-2\sum_{i=1}^n\lambda_i\EE\int_0^1\mathbf{Q}^{\star}(\xi)\mathbf{Q}_i(\xi)d\xi.
\end{align*}

In the following, we denote $T_1 =\EE \int_0^1\left(\mathbf{Q}^{\star}(\xi)\right)^2d\xi$, $T_2 =\sum_{i=1}^n \sum_{j=1}^n\lambda_i\lambda_j \EE \int_0^1\mathbf{Q}_i(\xi)\mathbf{Q}_j(\xi)d\xi $ and $T_3=-2\sum_{i=1}^n\lambda_i\EE\int_0^1\mathbf{Q}^{\star}(\xi)\mathbf{Q}_i(\xi)d\xi$.

    Let us now develop the semi-variogram formula given in Definition \ref{def_vario_W}.

\begin{align*}
    \gamma^{\text{\tiny W}}(x,x')&:=\frac{1}{2}\mathbb{E}\left[ \int_0^1\left(\mathbf{Q}_{\mu(x)}(\xi)-\mathbf{Q}_{\mu(x')}(\xi)\right)^2d\xi\right]\\
    & = \frac{1}{2} \EE\int_0^1 \left(\mathbf{Q}_{\mu(x)}(\xi)\right)^2d\xi + \frac{1}{2} \EE\int_0^1 \left(\mathbf{Q}_{\mu(x')}(\xi)\right)^2d\xi - \EE \int_0^1 \mathbf{Q}_{\mu(x)}(\xi)\mathbf{Q}_{\mu(x')}(\xi)d\xi.
\end{align*}

This gives:
\begin{align*}
    \EE \int_0^1 \mathbf{Q}_{\mu(x)}(\xi)\mathbf{Q}_{\mu(x')}(\xi)d\xi =\frac{1}{2}\left(\EE\int_0^1 \left(\mathbf{Q}_{\mu(x)}(\xi)\right)^2d\xi + \EE\int_0^1 \left(\mathbf{Q}_{\mu(x')}(\xi)\right)^2d\xi\right) - \gamma^{\text{\tiny W}}(x,x').
\end{align*}
By substituting into $T_2$ and $T_3$, we obtain:

\begin{align*}
    T_2 &=\sum_{i=1}^n \sum_{j=1}^n\lambda_i\lambda_j \left(\frac{1}{2}\EE\int_0^1 \left(\mathbf{Q}_i(\xi)\right)^2d\xi + \frac{1}{2}\EE\int_0^1 \left(\mathbf{Q}_j(\xi)\right)^2d\xi - \gamma^{\text{\tiny W}}(x_i,x_j)\right)\\
    &= - \sum_{i=1}^n \sum_{j=1}^n\lambda_i\lambda_j\gamma^{\text{\tiny W}}(x_i,x_j) + \frac{1}{2}\sum_{i=1}^n\lambda_i\EE\int_0^1 \left(\mathbf{Q}_i(\xi)\right)^2d\xi\sum_{j=1}^n\lambda_j+ \frac{1}{2}\sum_{j=1}^n\lambda_j\EE\int_0^1 \left(\mathbf{Q}_j(\xi)\right)^2d\xi\sum_{i=1}^n\lambda_i.
\end{align*}

By adding the constraint $\mathbb{1}_n^\top \lambda = 1$ we obtain:

\begin{align*}
    T_2 &= - \sum_{i=1}^n \sum_{j=1}^n\lambda_i\lambda_j\gamma^{\text{\tiny W}}(x_i,x_j) + \frac{1}{2}\sum_{i=1}^n\lambda_i\EE\int_0^1 \left(\mathbf{Q}_i(\xi)\right)^2d\xi+ \frac{1}{2}\sum_{j=1}^n\lambda_j\EE\int_0^1 \left(\mathbf{Q}_j(\xi)\right)^2d\xi\\
    &= -\sum_{i=1}^n \sum_{j=1}^n\lambda_i\lambda_j\gamma^{\text{\tiny W}}(x_i,x_j) + \sum_{i=1}^n\lambda_i\EE\int_0^1 \left(\mathbf{Q}_i(\xi)\right)^2d\xi.
\end{align*}

Also,

\begin{align*}
    T_3 &=-\sum_{i=1}^n\lambda_i \EE\int_0^1(\mathbf{Q}^{\star}(\xi))^2d\xi -\sum_{i=1}^n\lambda_i \EE\int_0^1(\mathbf{Q}_i(\xi))^2d\xi +2\sum_{i=1}^n\lambda_i\gamma^{\text{\tiny W}}(\x,x_i).\\
\end{align*}
By using the constraint and by linearity:

\begin{align*}
    T_3 &= - T_1 -\sum_{i=1}^n\lambda_i \EE\int_0^1(\mathbf{Q}_i(\xi))^2d\xi \\
    &\hspace{3cm}+2\sum_{i=1}^n\lambda_i\gamma^{\text{\tiny W}}(\x,x_i).\\
\end{align*}

Finally:
\begin{align*}
    \EE\left[\int_0^1\left(\mathbf{Q}^{\star}(\xi)-\hat{\mathbf{Q}}^{\star}(\xi)\right)^2 d\xi\right] &= T_1+T_2+T_3\\
    &= T_1 -\sum_{i=1}^n \sum_{j=1}^n\lambda_i\lambda_j\gamma^{\text{\tiny W}}(x_i,x_j) + \sum_{i=1}^n\lambda_i\EE\int_0^1 \left(\mathbf{Q}_i(\xi)\right)^2d\xi\\
    &\hspace{3cm}  -T_1 -\sum_{i=1}^n\lambda_i \EE\int_0^1(\mathbf{Q}_i(\xi))^2d\xi +2\sum_{i=1}^n\lambda_i\gamma^{\text{\tiny W}}(\x,x_i)\\
    &=-\sum_{i=1}^n \sum_{j=1}^n\lambda_i\lambda_j\gamma^{\text{\tiny W}}(x_i,x_j) +2\sum_{i=1}^n\lambda_i\gamma^{\text{\tiny W}}(\x,x_i)\\
    &= -\lambda^\top \Gamma_{\text{\tiny W}} \lambda +2\lambda^\top\gamma_{\text{\tiny W}}^{\star},
\end{align*}

where $\Gamma_{\text{\tiny W}}$ is the $n\times n$ matrix with $(\Gamma_{\text{\tiny W}})_{i,j} = \gamma^{\text{\tiny W}}(\|x_i-x_j\|)$ and $\gamma_{\text{\tiny W}}^{\star}$ is the column vector of size $n$ with $(\gamma_{\text{\tiny W}}^{\star})_i =\gamma^{\text{\tiny W}}(|x_i-\x|) $

To solve the constrained minimization problem, we introduce the Lagrange multiplier $\alpha\in\RR$ and define the Lagrangian:
$$
\mathcal{L}(\lambda,\alpha)  = -\lambda^\top \Gamma_{\text{\tiny W}} \lambda +2\lambda^\top\gamma_{\text{\tiny W}}^{\star}+ \alpha (\mathbb{1}_n^\top \lambda - 1).
$$

The solution is obtained by identifying the unique critical point of \( \mathcal{L}(\lambda, \alpha) \). 
For all \( \alpha \in \mathbb{R} \), the function \( \mathcal{L}(\cdot, \alpha) \) is convex since \( \Gamma_{\text{\tiny W}} \) is positive semi-definite. We have:

\begin{align}
    \left \{
    \begin{array}{c @{=} c}
        \frac{\partial \mathcal{L}(\lambda,\alpha)}{\partial \lambda}  &-2\Gamma_{\text{\tiny W}}\lambda + 2\gamma_{\text{\tiny W}}^{\star} +\alpha \mathbb{1}_n \\
        \frac{\partial \mathcal{L}(\lambda,\alpha)}{\partial \alpha} & \mathbb{1}_n^\top \lambda - 1.
    \end{array}
\right.
\end{align}
Then,
\begin{align*}
    \nabla \mathcal{L}(\lambda,\alpha) = 0 &\Leftrightarrow
\left \{
\begin{array}{c @{=} c}
    -2\Gamma_{\text{\tiny W}}\lambda + 2\gamma_{\text{\tiny W}} +\alpha \mathbb{1}_n & 0 \\
    \mathbb{1}_n^\top \lambda - 1 & 0.
\end{array}
\right.
\end{align*}

By setting $\beta = -2\alpha$ we can show that $\bar\lambda$ is the solution of
\begin{equation}
    \begin{bmatrix} \Gamma_{\text{\tiny W}} & \mathbb{1}_n \\ \mathbb{1}_n^\top & 0 \end{bmatrix}
\begin{bmatrix} \mathbb{\lambda} \\ \beta \end{bmatrix}
= \begin{bmatrix} \gamma_{\text{\tiny W}}^{\star} \\ 1 \end{bmatrix}.\\
\end{equation}
    
\end{proof}

\section{Proof of the virtual cross validation formulas for quantile functions}
\label{app:proof of virtual formulas}

\begin{proof}[Proof of Proposition \ref{Virtual formula}]
We begin by introducing notation to clarify the upcoming proof. 
We write $\hat Q^{\star}=\hat Q_{\mu(\x)}$, $Q_i = Q_{\mu(x_i)}$, $\mathcal{Q}^n = (Q_1,\cdots,Q_n)^\top$.

As in the scalar case \cite{Stein1999}, the vector $\bar{\lambda} \in \RR^n$  obtained by solving the ordinary Kriging problem (\ref{sigma_wass}) is:

\begin{align*}
    \bar{\lambda} = \Gamma_{\text{\tiny W}}^{-1}\gamma_{\text{\tiny W}}^{\star} +(1- \mathbb{1}_n^\top \Gamma_{\text{\tiny W}}^{-1}\gamma_{\text{\tiny W}}^{\star}) \frac{\Gamma_{\text{\tiny W}}^{-1}\mathbb{1}_n }{\mathbb{1}_n^\top \Gamma_{\text{\tiny W}}^{-1}\mathbb{1}_n}.
\end{align*}

That gives the following estimator for a fixed arbitrary $\xi \in (0,1)$:

\begin{align*}
\hat{Q}(\xi) &= \sum_{i=1}^n \lambda_i Q_i(\xi)\\
&=\bar{\lambda}^\top \mathcal{Q}^n(\xi) \\
&= \gamma_{\text{\tiny W}}^{\star T} \Gamma_{\text{\tiny W}}^{-1} \mathcal{Q}^n(\xi) +(1- \mathbb{1}_n^\top \Gamma_{\text{\tiny W}}^{-1}\gamma_{\text{\tiny W}}^{\star}) \frac{\mathbb{1}^\top\Gamma_{\text{\tiny W}}^{-1} \mathcal{Q}^n(\xi) }{\mathbb{1}_n^\top \Gamma_{\text{\tiny W}}^{-1}\mathbb{1}_n} .\\
\end{align*}

We can rewrite it with $\beta(\xi) = \frac{\mathbb{1}^\top\Gamma_{\text{\tiny W}}^{-1} \mathcal{Q}^n(\xi) }{\mathbb{1}_n^\top \Gamma_{\text{\tiny W}}^{-1}\mathbb{1}_n}$. 

\begin{align*}
    \hat{Q}(\xi) &= \gamma_{\text{\tiny W}}^{\star T} \Gamma_{\text{\tiny W}}^{-1} \mathcal{Q}^n(\xi) +(1- \mathbb{1}_n^\top \Gamma_{\text{\tiny W}}^{-1}\gamma_{\text{\tiny W}}^{\star})\beta(\xi) \\
    &= \gamma_{\text{\tiny W}}^{\star T} \Gamma_{\text{\tiny W}}^{-1} \mathcal{Q}^n(\xi) +(1- \gamma_{\text{\tiny W}}^{\star T} \Gamma_{\text{\tiny W}}^{-1}\mathbb{1}_n)\beta(\xi)\\
    &= \gamma_{\text{\tiny W}}^{\star T} \Gamma_{\text{\tiny W}}^{-1} \mathcal{Q}^n(\xi) + \beta(\xi) -\gamma_{\text{\tiny W}}^{\star T} \Gamma_{\text{\tiny W}}^{-1}\mathbb{1}_n\beta(\xi) \\
    &= \beta(\xi) + \gamma_{\text{\tiny W}}^{\star T} \Gamma_{\text{\tiny W}}^{-1}(\mathcal{Q}^n(\xi) -\mathbb{1}_n \beta(\xi)) .
\end{align*}
We recall that \((\gamma_{\text{\tiny W}}^{\star })_i = \gamma^{\text{\tiny W}}(\x - x_i)\), where \(\x\) is the point at which the Kriging estimate is evaluated, as defined in (\ref{wass_krig}), and \(x_i\) represents the \(i^{\text{th}}\) observation point. Consequently, the Kriging estimator can be interpreted as a function of the estimation location \(\x\).

\begin{align}
    \hat{Q}(\xi) = \beta(\xi)+ \sum_{i=1}^n b_i(\xi) \gamma^{\text{\tiny W}}(\x-x_i)
    \label{Krig_form2}
\end{align}
where $b_i(\xi) = \left[\Gamma_{\text{\tiny W}}^{-1}(\mathcal{Q}^n(\xi) -\mathbb{1}_n \beta(\xi))\right]_i $.

Since Kriging is an interpolation method, the estimator is required to reproduce the observed values exactly at the observation points. This requirement leads to the following equalities:
\begin{align*}
    Q_j(\xi) = \sum_{i=1}^n b_i(\xi)\gamma^{\text{\tiny W}}(x_j - x_i) +\beta(\xi), (\forall j\in(1,...,n)).
\end{align*}

Let $b(\xi)$ be the vector of size n composed of the $b_i(\xi)$. Notice that:
\begin{align*}
    \mathbb{1}_n b(\xi) &= \mathbb{1}_n \Gamma_{\text{\tiny W}}^{-1}(\mathcal{Q}^n(\xi) -\mathbb{1}_n \beta(\xi))\\
    &= \mathbb{1}_n \Gamma_{\text{\tiny W}}^{-1}\mathcal{Q}^n(\xi) - \mathbb{1}_n \Gamma_{\text{\tiny W}}^{-1}\mathbb{1}_n\frac{\mathbb{1}^\top\Gamma_{\text{\tiny W}}^{-1} \mathcal{Q}^n(\xi) }{\mathbb{1}_n^\top \Gamma_{\text{\tiny W}}^{-1}\mathbb{1}_n}\\
    &=0.
\end{align*}

With the slight abuse of notation $\gamma^{\text{\tiny W}}(x_i,x_j) = \gamma^{\text{\tiny W}}_ {i,j}$ this raises the following matrix system for all $\xi \in (0,1)$:
$$
\begin{bmatrix} \gamma^{\text{\tiny W}}_ {1,1} &\cdots &\gamma^{\text{\tiny W}}_{1,n}& 1\\
\vdots& & \vdots & \vdots\\
\gamma^{\text{\tiny W}}_{n,1} &\cdots  &\gamma^{\text{\tiny W}}_{n,n}& 1\\
1 & \cdots & 1 & 0\\
\end{bmatrix}
\begin{bmatrix} b_1(\xi)\\
\vdots\\
b_n(\xi)\\
\beta(\xi)\\
\end{bmatrix}
= \begin{bmatrix} Q_1(\xi)\\
\vdots\\
Q_n(\xi)\\
0\\
\end{bmatrix}.
$$

If we suppose $Q_i(\xi)$ unknown for $(i\in\{1,...,n\})$, for $\beta^{[i]}(\xi) = \frac{\mathbb{1}_{n-1}^\top(\Gamma_{\text{\tiny W}}^{-1})^{[i]}\mathcal{Q}^{[i]}(\xi) }{\mathbb{1}_{n-1}^\top (\Gamma_{\text{\tiny W}}^{-1})^{[i]}\mathbb{1}_{n-1}}$, $b^{[i]}(\xi) = (\Gamma_{\text{\tiny W}}^{-1})^{[i]}(\mathcal{Q}^{[i]}(\xi) -\mathbb{1}_{n-1} \beta^{[i]}(\xi))$ where $(\Gamma_{\text{\tiny W}}^{-1})^{[i]}$ denotes the matrix obtained by deleting the $ith$ row and the $ith$ column of $\Gamma_{\text{\tiny W}}^{-1}$ and letting $ \mathcal{Q}^{[i]}$ be the $ith$ subvector of $\mathcal{Q}_{n}$:

\begin{align*}
\begin{bmatrix} \gamma^{\text{\tiny W}}_{1,1} & \cdots & \gamma^{\text{\tiny W}}_{1,i-1} &\gamma^{\text{\tiny W}}_{1,i+1} & \cdots &\gamma^{\text{\tiny W}}_{1,n}& 1\\
\vdots& & \vdots & \vdots & &\vdots & \vdots\\
\gamma^{\text{\tiny W}}_{i-1,1} & \cdots & \gamma^{\text{\tiny W}}_{i-1,i-1} &\gamma^{\text{\tiny W}}_{i-1,i+1} & \cdots &\gamma^{\text{\tiny W}}_{i-1,n}& 1\\
\gamma^{\text{\tiny W}}_{i+1,1} & \cdots & \gamma^{\text{\tiny W}}_{i+1,i-1} &\gamma^{\text{\tiny W}}_{i+1,i+1} & \cdots &\gamma^{\text{\tiny W}}_{i+1,n}& 1\\
\vdots& & \vdots & \vdots & &\vdots & \vdots\\
\gamma^{\text{\tiny W}}_{n,n} & \cdots & \gamma^{\text{\tiny W}}_{n,i-1} &\gamma^{\text{\tiny W}}_{n,i+1} & \cdots &\gamma^{\text{\tiny W}}_{n,n}& 1\\
1 & \cdots & 1 &1 & \cdots &1& 0\\
\end{bmatrix}
\begin{bmatrix} b^{[i]}_1\\
\vdots\\
b^{[i]}_{i-1}(\xi)\\
b^{[i]}_{i+1}(\xi)\\
\vdots\\
b^{[i]}_n(\xi)\\
\beta^{[i]}(\xi)\\
\end{bmatrix}
= \begin{bmatrix} Q_1(\xi)\\
\vdots\\
Q_{i-1}(\xi)\\
Q_{i+1}(\xi)\\
\vdots\\
Q_n(\xi)\\
0\\
\end{bmatrix}
.
\end{align*}

Using the formula (\ref{Krig_form2}) for the point $x_i$ with the $n-1$ other observations, we have:
$$\hat{Q}_i(\xi) = \beta^{[i]}(\xi) + \sum_{\substack{j=1  \\ j \ne i}}^{n} b^{[i]}_j(\xi)\gamma^{\text{\tiny W}}(x_j - x_i).
$$

We can now add this to the previous matrix system. This yields:
$$
\underbrace{
\begin{bmatrix} 
\gamma^{\text{\tiny W}}_{1,1}   &\cdots    &\gamma^{\text{\tiny W}}_{1,i-1}  &\gamma^{\text{\tiny W}}_{1,i}   &\gamma^{\text{\tiny W}}_{1,i+1}  &\cdots&\gamma^{\text{\tiny W}}_{1,n}  &1\\
\vdots         &          &\vdots          &\vdots         &\vdots          
&      &\vdots        &\vdots\\
\gamma^{\text{\tiny W}}_{i-1,1} &\cdots    &\gamma^{\text{\tiny W}}_{i-1,i-1}&\gamma^{\text{\tiny W}}_{i-1,i} &\gamma^{\text{\tiny W}}_{i-1,i+1}&\cdots&\gamma^{\text{\tiny W}}_{i-1,n}&1\\

\gamma^{\text{\tiny W}}_{i,1}   &\cdots    &\gamma^{\text{\tiny W}}_{i,i-1}  &\gamma^{\text{\tiny W}}_{i,i}   &\gamma^{\text{\tiny W}}_{i,i+1}  &\cdots&\gamma^{\text{\tiny W}}_{i,n}&1\\

\gamma^{\text{\tiny W}}_{i+1,1} &\cdots    &\gamma^{\text{\tiny W}}_{i+1,i-1}&\gamma^{\text{\tiny W}}_{i+1,i}&\gamma^{\text{\tiny W}}_{i+1,i+1}&\cdots&\gamma^{\text{\tiny W}}_{i+1,n}& 1\\
\vdots         &          & \vdots         & \vdots       &\vdots          &      & \vdots& \vdots\\
\gamma^{\text{\tiny W}}_{n,n} & \cdots & \gamma^{\text{\tiny W}}_{n,i-1}&\gamma^{\text{\tiny W}}_{n,i} &\gamma^{\text{\tiny W}}_{n,i+1} & \cdots &\gamma^{\text{\tiny W}}_{n,n}& 1\\
1 & \cdots & 1 &1 &1& \cdots &1& 0\\
\end{bmatrix}
}_{\begin{bmatrix} \Gamma_{\text{\tiny W}} & \mathbb{1}_n \\ \mathbb{1}_n^\top & 0 \end{bmatrix}}
\begin{bmatrix} b_1^{[i]}(\xi)\\
\vdots\\
b^{[i]}_{i-1}(\xi)\\
0\\
b^{[i]}_{i+1}(\xi)\\
\vdots\\
b^{[i]}_n\\
\beta^{[i]}(\xi)\\
\end{bmatrix}
= \begin{bmatrix} Q_1(\xi)\\
\vdots\\
Q_{i-1}(\xi)\\
\hat{Q}_i(\xi)\\
Q_{i+1}(\xi)\\
\vdots\\
Q_n(\xi)\\
0\\
\end{bmatrix}.
$$

Since $\Gamma_W$ is a positive semi-definite matrix, it is invertible, so the block matrix $\begin{bmatrix} \Gamma_{\text{\tiny W}} & \mathbb{1}_n \\ \mathbb{1}_n^\top & 0 \end{bmatrix}$ is invertible \cite{FUZHENZHANG} and its inverse is a block matrix of the form:
$$
\begin{bmatrix}  \tilde{\Gamma}& B \\ B^\top & 0 \end{bmatrix}
$$
where $\tilde{\Gamma} = \Gamma_{\text{\tiny W}}^{-1} - \Gamma_{\text{\tiny W}}^{-1}\mathbb{1}_n(\mathbb{1}_n^\top\Gamma_{\text{\tiny W}}^{-1}\mathbb{1}_n)^{-1}\mathbb{1}_n^\top\Gamma_{\text{\tiny W}}^{-1}$.

By taking the $i^{th}$ line of the system,we have: 
\begin{align*}
    0 &= \sum_{\substack{j=1  \\ j \ne i}}^{n} \tilde{\Gamma}_{i,j}Q_j(\xi) + \tilde{\Gamma}_{i,i}\hat{Q}_i(\xi).
\end{align*}

Finally we have:
\begin{align*}
    \hat{Q}_i(\xi) = - \sum_{\substack{j=1  \\ j \ne i}}^{n} \frac{\tilde{\Gamma}_{i,j}}{\tilde{\Gamma}_{i,i}} Q_j(\xi)
\end{align*}

and 
\begin{align*}
    \hat{Q}_i(\xi) - Q_i(\xi)= - \sum_{\substack{j=1}}^{n} \frac{\tilde{\Gamma}_{i,j}}{\tilde{\Gamma}_{i,i}} Q_j(\xi).
\end{align*}

Since every step is true for all $\xi \in (0,1)$ and $\tilde{\Gamma}$ does not depend on $\xi$, we have:
\begin{align*}
    \hat{Q}_i - Q_i= - \sum_{\substack{j=1}}^{n} \frac{\tilde{\Gamma}_{i,j}}{\tilde{\Gamma}_{i,i}} Q_j.
\end{align*}
\end{proof}

\begin{proof}[Proof of Corollary \ref{corollary}]

The proof of the corollary arises directly from the virtual formula \eqref{line:virtual formula} since:
\begin{align*}
    \hat\sigma_i^2 &=\EE \left[\int_0^1\left(Q_{\mu(x_i)}(\xi)-\hat{Q}^{(-i)}_{\mu(x_i)}(\xi)\right)^2d\xi\right]\\
    &=\EE \left[\int_0^1\left(- \sum_{\substack{j=1}}^{n} \frac{\tilde{\Gamma}_{i,j}}{\tilde{\Gamma}_{i,i}} Q_j(\xi)\right)^2d\xi\right]\\
    &=\EE \left[\int_0^1 \sum_{j=1}^n \sum_{k=1}^n \frac{\tilde{\Gamma}_{i,j}\tilde{\Gamma}_{i,k}}{\tilde{\Gamma}_{i,i}^2}Q_j(\xi)Q_k(\xi)d\xi\right].
\end{align*}
Thanks to the calculation of $T_2$ from Appendix \ref{app: proof of the Kriging system solution}, we have: 
\begin{align*}
    \hat\sigma_i^2&= -\sum_{j=1}^n \sum_{k=1}^n \frac{\tilde{\Gamma}_{i,j}\tilde{\Gamma}_{i,k}}{\tilde{\Gamma}_{i,i}^2}\gamma^{\text{\tiny W}}(x_i,x_j) + \sum_{j=1}^n\tilde{\Gamma}_{i,j}\EE\int_0^1 \left(Q_j(\xi)\right)^2d\xi.
\end{align*}
By stationarity, $\EE\int_0^1 \left(Q_j(\xi)\right)^2d\xi$ is constant; we denote it $C$. \\
Moreover, notice that $\tilde{\Gamma}\times\mathbb{1}_n =\Gamma_{\text{\tiny W}}^{-1}\mathbb{1}_n - \Gamma_{\text{\tiny W}}^{-1}\mathbb{1}_n(\mathbb{1}_n^\top\Gamma_{\text{\tiny W}}^{-1}\mathbb{1}_n)^{-1}\mathbb{1}_n^\top\Gamma_{\text{\tiny W}}^{-1}\mathbb{1}_n = 0$.
That shows that $\sum_{j=1}^n\tilde{\Gamma}_{i,j}\EE\int_0^1 \left(Q_j(\xi)\right)^2d\xi = 0.$

So
\begin{align*}
    \hat\sigma_i^2&= -\sum_{j=1}^n \sum_{k=1}^n \frac{\tilde{\Gamma}_{i,j}\tilde{\Gamma}_{i,k}}{\tilde{\Gamma}_{i,i}^2}\gamma^{\text{\tiny W}}(x_j,x_k)\\
    &=-\sum_{j=1}^n\frac{\tilde{\Gamma}_{i,j}}{\tilde{\Gamma}_{i,i}^2}\sum_{k=1}^n\tilde{\Gamma}_{i,k}\gamma^{\text{\tiny W}}(x_j,x_k)\\
    &=-\sum_{j=1}^n\frac{\tilde{\Gamma}_{i,j}}{\tilde{\Gamma}_{i,i}^2}\sum_{k=1}^n\tilde{\Gamma}_{i,k}\Gamma_{\text{\tiny W},k,j}.
\end{align*}

In the previous proof, we defined the matrix $\begin{bmatrix} \Gamma_{\text{\tiny W}} & \mathbb{1}_n \\ \mathbb{1}_n^\top & 0 \end{bmatrix}$ and its inverse $\begin{bmatrix}  \tilde{\Gamma}& B \\ B^\top & 0 \end{bmatrix}$.
By denoting $I_n$ the identity matrix of size $n$ we have: 
\begin{align*}
    I_{n+1} = \begin{bmatrix} \Gamma_{\text{\tiny W}} & \mathbb{1}_n \\ \mathbb{1}_n^\top & 0 \end{bmatrix}\begin{bmatrix}  \tilde{\Gamma}& B \\ B^\top & 0 \end{bmatrix}.
\end{align*}
So
$$
I_n = \Gamma_{\text{\tiny W}}\tilde{\Gamma} + \mathbb{1}_n^\top B^\top.$$
It shows that 
$$\sum_{k=1}^n\tilde{\Gamma}_{i,k}\Gamma_{\text{\tiny W},k,j} = \delta_{i,j} - B_i,$$
where $\delta_{i,j} =1 $ if $i=j$, $0$ if $i\neq j$.

Then 
\begin{align*}
    \hat\sigma_i^2&=-\sum_{j=1}^n\frac{\tilde{\Gamma}_{i,j}}{\tilde{\Gamma}_{i,i}^2}(\delta_{i,j} - B_i)\\
    &= - \frac{1}{\tilde{\Gamma}_{i,i}}+\frac{B_i}{\tilde{\Gamma}_{i,i}^2}\sum_{j=1}^n\tilde{\Gamma}_{i,j}.
\end{align*}
As shown before, $\tilde{\Gamma}\times\mathbb{1_n} = 0$, so
\begin{align*}
    \hat\sigma_i^2 = - \frac{1}{\tilde{\Gamma}_{i,i}}.
\end{align*}
\end{proof}

\newpage

\bibliographystyle{plain}

\end{document}